\newtheorem{thm}{Theorem}[section]
\newtheorem{tl}[thm]{Corollary}
\newtheorem{mt}[thm]{Proposition}
\theoremstyle{remark}
\newtheorem{zj}{Remark}[section]
\theoremstyle{definition}
\newtheorem{dy}{Definition}[section]
\numberwithin{equation}{section}
\renewcommand{\dim}{\operatorname{dim}}
\begin{document}
\title{A Rigid Theorem for Deformed Hermitian-Yang-Mills equation}
%\keywords{generalized K\"ahler-Ricci soliton, modified Calabi flow, modified $K$-energy}
\author{Xiaoli Han}
\address{Xiaoli Han\\ Math department of Tsinghua university\\ Beijing\\ 100084\\ China\\} \email{hanxiaoli@mail.tsinghua.edu.cn}
\author{Xishen Jin}
\address{Xishen Jin\\ Department of Mathematics\\ Renmin University of China \\ Beijing\\ 100872\\ China\\} \email{jinxishen@ruc.edu.cn}

%\thanks{AMS Mathematics Subject Classification. 53C55,\ 32W20.}

\begin{abstract}
In this paper, we study the deformed Hermitian-Yang-Mills equation on compact K\"ahler manifold with non-negative orthogonal bisectional curvature. We prove that the curvatures of deformed Hermitian-Yang-Mills metrics are parallel with respect to the background metric if there exists a positive constant $C$ such that $-\frac{1}{C}\omega<\sqrt{-1}F<C\omega$. We also study the self-shrinker over $\mathbb{C}^n$ to the corresponding parabolic flow. We prove that  the self-shrinker over $\mathbb{C}^n$ is a quadratic polynomial function.  We also show the similar rigid theorem for the J-equations  and the self-shrinkers over $\mathbb{C}^n$ to J-flow.
\end{abstract}
\maketitle
%\newpage
%\tableofcontents
%\newpage

\section{Introduction}
Let $(X,\omega)$ be a connected compact K\"ahler manifold of complex dimension $n$ and $L$ be a holomorphic line bundle over $X$. Given a metric $h$ on $L$, we define the complex function
\[
\zeta:=\frac{(\omega-F)^n}{\omega^n},
\]
where $F$ is the curvature of the Chern connection with respect to the metric $h$. It is easy to see that the average of this function is a fixed complex number
\[
Z_{L,[\omega]}:= \int_X \zeta \frac{\omega^n}{n!}
\]
dependent only the cohomology classes on $c_1(L)$ and  $[\omega]\in H^{1,1}(X,\mathbb{R})$. Let $\theta$ denote the argument of $\zeta$ and $\hat \theta$ the argument of $Z_{L,[\omega]}$. Here the branch cut is specified.

\begin{dy}
  A Hermitian metric $h$ on $L$ is said to be deformed Hermitian-Yang-Mills metric if it satisfies
  \begin{equation}
  \label{eqn-dHYM-angle}
  \theta=\hat\theta ,
  \end{equation}
  or equivalently
  \begin{equation}
    \label{eqn-dHYM-form}
    \operatorname{\mathop{Im}}(\omega-F)^n=\tan(\hat \theta) \operatorname{\mathop{Re}}(\omega-F)^n.
  \end{equation}
\end{dy}

%\begin{zj}
%  If we define an endomorphism $K$ of $T^{1,0}(X)$ via the contraction
%  \[
%  K:=\omega^{-1}F
%  \]
%  and $\{\lambda_i\}_{i=1,\cdots,n}$ are the eigenvalues of $K$, then the deformed Hermitian-Yang-Mills is equivalent to
%  \begin{equation}
%    \label{eqn-dHYM-eigenvalue}
%    \sum_{i=1}^n \arctan\lambda_i=\hat\theta.
%  \end{equation}
%\end{zj}

The deformed Hermitian-Yang-Mills equation was discovered by Marino et all \cite{MMMS} as the requirement for a D-brane on the B-model of mirror symmetry to be supersymmetric. Recently, it has been studied by Collins-Jacob-Yau \cite{CJY}, Collins-Xie-Yau \cite{CXY}, Jacob-Yau \cite{JY} and some other people. According to superstring theory, the spacetime of the universe is constrained to be a product of a compact Calabi-Yau threefold and a four-dimensional Lorentzian manifold. A `duality' relates the geometry of one Calabi-Yau manifold to another `mirror' Calabi-Yau manifold. From a differential geometry viewdpoint this might be thought of a relationship between the existence of `nice' metrics on the line bundle over one Calabi-Yau manifold and the special Lagrangian submanifolds in another Calabi-Yau manifold. In \cite{LYZ}, Leung-Yau-Zaslow showed that the deformed Hermitian-Yang-Mills equation on a line bundle corresponds to the special Lagrangian equation in the mirror.

In \cite{CXY}, Collins-Xie-Yau addressed a Chern number inequality on $3$-dimensional K\"ahler manifold $(X,\Omega,L)$ admitting a deformed Hermitian-Yang-Mills metric by considering the position and winding angle of \[
\gamma(t)=-\int_X e^{-t\sqrt{-1}\omega}ch(L).
\]
As consequences of this Chern number inequality, they obtained some algebraic stability conditions to the existence of deformed Hermitian-Yang-Mills metrics.

In \cite{CJY}, Collins-Jacob-Yau showed the existence of deformed Hermitian-Yang-Mills metrics under the assumption of the existence of a $\mathcal{C}$-subsolution with supercritical phase using the method of continuity. They also conjectured some stability-type cohomological obstructions to the existence of deformed Hermitian-Yang-Mills metrics. In particular, they got a Liouville theorem for the deformed Hermitian-Yang-Mills metrics with bounded complex Hessian on $\mathbb{C}^n$ under the assumption of supercritical phase, i.e. $\hat\theta\in(\frac{n-2}{2}\pi,\frac{n}{2}\pi)$, by applying the Evans-Krylov theory.

In \cite{P2}, Pingali viewed the deformed Hermitian-Yang-Mills equation as a generalized type of Monge-Amp\`ere equation PDE with ``non-constant coefficients'' and got some existence results to the deformed Hermitian-Yang-Mills equation for some ranges of the phase angle assuming the existence of a subsolution. In particular, Pingali generalised the work of \cite{CS} on toric varieties to the deformed Hermitian-Yang-Mills equation under some algebraic geometric stability condition and addressed a conjecture of Collins-Jacob-Yau \cite{CJY}.

In \cite{JY}, Jacob-Yau provided a necessary and sufficient criterion for the existence of dHYM metrics in the case that $X$ is a K\"ahler surface. They also introduced a line bundle version of the Lagrangian mean curvature flow and proved the convergence of the flow when $L$ is sufficiently ample and $X$ has non-negative orthogonal bisectional curvature. In particular, they required that the initial data satisfies the hypercritical phase condition so that the Evans-Krylov theory works. In \cite{HY}, Han-Yamamoto established a $\varepsilon$-regularity theorem for this flow in the semi-flat case.  We also remark that the criterion of Jacbo-Yau on K\"ahler surfaces was generalized to K\"ahler manifold of complex dimension $3$ by viewing the deformed Hermitian-Yang-Mills equation as a generalized type of Monge-Am\`ere equation in \cite{P1}.

In the first part of this paper, we mainly consider the rigidity of deformed Hermitian-Yang-Mills metrics on $(X,\omega)$ with nonnegative orthogonal bisectional curvature.

Classically, rigid theorems in K\"ahler geometry aim to show that the K\"ahler manifolds satisfying some natural curvature conditions are the known examples. A famed example is the complex projective space $\mathbb{C}P^N$ equipped with the standard Fubini-Study metric $g_{FS}$ whose bisectional curvature is positive. The Frankel conjecture states that every compact K\"ahler manifold with positive bisectional curvature is biholomorphic to $\mathbb{C}P^N$. This was established independently by Mori \cite{M} and Siu-Yau \cite{SY}. An interesting new proof of the Frankel conjecture was given by Chen-Tian \cite{CT} with the help of K\"ahler-Ricci flow. In their sense, positivity of bisectional curvature is preserved under the K\"ahler-Ricci flow and the flow converges to a K\"ahler-Einstein metric exponentially. Then a classical result of Berger \cite{B} and Golberg-Kobayashi \cite{GK} on rigidity of closed constant scalar curvature K\"ahler manifold with positive bisectional curvature implies the Frankel conjecture. The results of Berger \cite{B} and Golberg-Kobayashi \cite{GK} was generalized to compact K\"ahler manifolds with nonnegative bisectional curvature in \cite{HSW} by Howard-Smyth-Wu. In details, Howard-Smyth-Wu proved that a compact csck K\"ahler manifold with nonnegative bisectional curvature is automatically a K\"ahler-Einstein manifold in \cite{HSW}. We also remark that the csck condition above was generalized to some fully nonlinear conditions on the Ricci tensor in \cite{GLZ} and the key technology is that the Ricci tensor on K\"ahler manifold satisfies the Codazzi condition.

\begin{dy}
  A K\"ahler metric $\omega$ is said to have nonnegative orthogonal bisectional curvature, if for any orthonormal tangent frame $\{e_1,\cdots,e_n\}$ at any $x\in X$, the curvature tensor of $\omega$ satisfies that
  \[
  R_{i \bar i j \bar j}= R(e_i,\bar e_{ i},e_j,\bar e_{j})\geq 0, \text{   for all } 1\leq i,j\leq n \text{ and } i\neq j.
  \]
\end{dy}

We remark that nonnegativity of the orthogonal bisectional curvature is weaker than nonnegativity of the bisectional curvature. In fact, the former condition is satisfied by not only complex projective spaces and the Hermitian symmetric spaces, but also some compact K\"ahler manifolds of dimension $\geq 2$ whose holomorphic sectional curvature is strictly negative somewhere. We refer the readers to the work of Gu-Zhang \cite{GZ}, Mok \cite{Mok} and Siu-Yau \cite{SY}.

The main result of this paper is the following theorem.

%
%Let us first recall the following definition of nonnegative quadratic orthogonal bisectional curvature.
%
%\begin{dy}
%  A K\"ahler manifold $(X,\omega)$ of complex dimension $n$ is said to have nonnegative quadratic orthogonal bisectional curvature at $p\in M$ if $n\geq 2$ and for any unitary frame $\{e_1,\cdots,e_n\}$ of $T^{1,0}_p(M)$ and any real number $\xi_1,\cdots,\xi_n$ we have
%  \[
%  \sum_{i,j=1}^n R_{i\bar i j\bar j}(\xi_i-\xi_j)^2\geq 0.
%  \]
%\end{dy}
%
%A manifold $(X,\omega)$ is said to have nonnegative quadratic orthogonal bisectional curvature provided it does at every point. The condition of nonnegative quadratic orthogonal bisectional curvature is weaker than requiring $(X,\omega)$ to have nonnegative orthogonal bisectional curvature:$R(V,\bar V,W,\bar W)\geq 0$ for any orthogonal unitary pair $V,W\in T^{(1,0)}(X)$. However, on complex surfaces these two conditions are equivalent. The first example of K\"ahler manifold with nonnegative orthogonal quadratic orthogonal bisectional curvature which does not support a K\"ahler metric with nonnegative orthogonal bisectional curvature was provided in \cite{LWZ}.

\begin{thm}
\label{thm-main}
 Let $(X,\omega)$ be a K\"ahler manifold with nonnegative orthogonal bisectional curvature and $(L, h)$ be a holomorphic line bundle on $X$. Suppose $h$ solves $(\ref{eqn-dHYM-angle})$. If there exists a positive constant $C$ such that $-\frac{1}{C} \omega <\sqrt{-1}F < C\omega$, i.e. the eigenvalues $\lambda_i$ is in $(-\frac{1}{C},C)$, then $\lambda_i(i=1,\cdots,n)$ are all constants. In particular, if the orthogonal bisectional curvature is strictly positive at some point $p\in X$, then $\sqrt{-1}F=c\omega$ for some constant $c$.
\end{thm}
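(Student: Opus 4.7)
Fix an arbitrary $p \in X$ and choose a unitary normal frame at $p$ diagonalizing $H := \sqrt{-1}F$, so that $H_{i\bar j}(p) = \lambda_i \delta_{ij}$. The map $\theta(H) := \sum_i \arctan\lambda_i(H)$ is a smooth spectral function on Hermitian matrices, and the dHYM equation asserts $\theta \equiv \hat\theta$ on $X$. Unpacking the identity $\Delta\theta \equiv 0$ via the Daletskii--Krein formulas for first and second derivatives of a spectral function yields, at $p$, the first-order constraint $\sum_i (1+\lambda_i^2)^{-1}\nabla_k H_{i\bar i} = 0$ together with the second-order identity
\begin{equation}\label{eqn-plan-star}
\sum_i \frac{g^{k\bar l}\nabla_k\nabla_{\bar l}H_{i\bar i}}{1+\lambda_i^2} \;=\; \sum_{i,j,k}\frac{(\lambda_i+\lambda_j)\,|\nabla_k H_{i\bar j}|^2}{(1+\lambda_i^2)(1+\lambda_j^2)}.
\end{equation}

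Next I would rewrite the left-hand side of \eqref{eqn-plan-star} via a Bochner-type rearrangement. Because $F$ is $d$-closed, $H$ satisfies the Codazzi-type identities $\nabla_k H_{i\bar j} = \nabla_i H_{k\bar j}$ and the conjugate. Applying these and commuting covariant derivatives (which produces curvature contributions from $[\nabla_k,\nabla_{\bar l}]$), one obtains
\[
g^{k\bar l}\nabla_k\nabla_{\bar l}H_{i\bar i} \;=\; \nabla_i\nabla_{\bar i}(\Lambda H) \;+\; (\mathrm{curv}\cdot H)_{i\bar i}, \qquad \Lambda H := \sum_j \lambda_j .
\]
Since $H$ is diagonal at $p$, the curvature contraction collapses to a sum involving only the orthogonal bisectional components $R_{i\bar i j\bar j}$. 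Weighting by $(1+\lambda_i^2)^{-1}$, summing over $i$, and combining with \eqref{eqn-plan-star} to absorb the mixed derivative of $\Lambda H$ via the first-order constraint, one should arrive at a pointwise identity of the schematic shape
\[
L(\Lambda H)(p) \;=\; -\sum_{i\neq j}\frac{R_{i\bar i j\bar j}\,(\lambda_i-\lambda_j)^2}{(1+\lambda_i^2)(1+\lambda_j^2)} \;-\; \mathcal Q(\nabla H)(p),
\]
where $L := \sum_i (1+\lambda_i^2)^{-1}\nabla_i\nabla_{\bar i}$ is the linearized operator --- uniformly elliptic by the hypothesis $\lambda_i \in (-1/C,C)$ --- and $\mathcal Q(\nabla H) \ge 0$ is a gradient quadratic form whose nonnegativity is also secured by the boundedness of the eigenvalues.

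Both the curvature sum and $\mathcal Q$ are nonnegative, so $L(\Lambda H) \le 0$ on $X$. Since $X$ is compact, $\Lambda H$ attains its minimum at an interior point, and the strong minimum principle for the uniformly elliptic operator $L$ then forces $\Lambda H$ to be constant. The identity then delivers pointwise vanishing of $\mathcal Q(\nabla H)$ and of the curvature sum throughout $X$. In particular $\nabla H \equiv 0$, so each $\lambda_i$ is constant, proving the first assertion. If additionally the orthogonal bisectional curvature is strictly positive at some point $p_0$, the pointwise vanishing forces $\lambda_i(p_0) = \lambda_j(p_0)$ for all $i,j$; constancy of eigenvalues then propagates the equality everywhere, yielding $\sqrt{-1}F = c\omega$.

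The principal obstacle is the Bochner rearrangement itself: unlike the Howard--Smyth--Wu CSCK setting where the Ricci form is automatically harmonic, here $H$ is only $d$-closed, its codifferential is nonzero, and the ``non-harmonic correction'' must be absorbed using the indefinite right-hand side of \eqref{eqn-plan-star}. The boundedness hypothesis $\lambda_i \in (-1/C,C)$ is precisely what controls all the spectral weights and cross-terms so that this absorption leaves behind only nonnegative contributions.
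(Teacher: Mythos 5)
There is a genuine gap, and it sits exactly at the ``schematic shape'' identity on which your whole argument rests. If you carry out the rearrangement you describe, the identity you actually get is not the one you claim. Using the Codazzi identity, the commutation of derivatives, and the first-order constraint $\eta^{i\bar j}\nabla_k F_{i\bar j}=0$, the linearized operator $L=\sum_i(1+\lambda_i^2)^{-1}\nabla_i\nabla_{\bar i}$ applied to the trace $\Lambda H=\sum_i\lambda_i$ yields, at the diagonalizing point,
\[
L(\Lambda H)\;=\;\sum_{i,j,k}\frac{(\lambda_i+\lambda_j)\,|\nabla_k H_{i\bar j}|^2}{(1+\lambda_i^2)(1+\lambda_j^2)}\;-\;\frac12\sum_{i,j}\frac{(\lambda_i+\lambda_j)(\lambda_i-\lambda_j)^2}{(1+\lambda_i^2)(1+\lambda_j^2)}\,R_{i\bar i j\bar j}
\]
(up to overall sign conventions in the curvature term). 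Both terms carry the factor $(\lambda_i+\lambda_j)$ --- your own equation \eqref{eqn-plan-star} already exhibits it in the gradient quadratic --- and this factor has no sign under the hypothesis $\lambda_i\in(-\tfrac1C,C)$: two slightly negative eigenvalues make it negative. The hypothesis gives $1+\lambda_i\lambda_j>0$, not $\lambda_i+\lambda_j\ge0$, so neither $\mathcal Q\ge0$ nor the nonnegativity of the curvature sum follows, the inequality $L(\Lambda H)\le0$ is unavailable, and the minimum-principle step collapses. Even if constancy of $\Lambda H$ were somehow known, the identity would only force an \emph{indefinite} combination of gradient and curvature terms to vanish, which does not give $\nabla F\equiv0$.

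The repair is to change the auxiliary function: the paper applies the same linearized operator $\Delta_\eta$ not to the trace but to $\log\frac{\eta^n}{\omega^n}=\log\det(I+K^2)=\sum_i\log(1+\lambda_i^2)$. The extra chain-rule factor $\tfrac{2\lambda_i}{1+\lambda_i^2}$ is exactly what converts the $(\lambda_i+\lambda_j)$-weighted terms into
\[
\Delta_\eta\log\frac{\eta^n}{\omega^n}=2\sum_{i,j,p}\frac{(1+\lambda_i\lambda_j)\,|F_{j\bar p,i}|^2}{(1+\lambda_i^2)(1+\lambda_j^2)(1+\lambda_p^2)}+2\sum_{i,p}\frac{R_{i\bar i p\bar p}(\lambda_i-\lambda_p)^2}{(1+\lambda_i^2)(1+\lambda_p^2)},
\]
where now the gradient weight $1+\lambda_i\lambda_j$ is positive precisely because $-\tfrac1C\omega<\sqrt{-1}F<C\omega$ implies $\lambda_i\lambda_j>-1$, and the curvature weight is $(\lambda_i-\lambda_p)^2\ge0$. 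Subharmonicity with respect to $\Delta_\eta$, the maximum principle on compact $X$, and the vanishing of the (now nonnegative) gradient term then give $\nabla F\equiv0$ and the rest of your endgame (constancy of the $\lambda_i$, and $\sqrt{-1}F=c\,\omega$ under strict positivity at a point) goes through as you wrote it. So your overall strategy --- differentiate the equation, Bochner-rearrange with Codazzi, use the linearized operator and the maximum principle --- is the right one, but it only closes with $\log\det(I+K^2)$ (or an equivalent quantity whose derivative carries the $\lambda_i$ factor), not with $\operatorname{tr}_\omega(\sqrt{-1}F)$.
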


The deformed Hermitian-Yang-Mills equation can be seen as a complex analogy to the special Lagrangian equation. In \cite{Y1,Y2}, Yuan proved that viewed as a graph over $\mathbb{R}^n$, the entire special Lagrangian submanifold in $\mathbb{C}^n$ must be a plane under some assumptions of the Hessian of the potential function. Here our assumptions can be interpreted as the complex Hessian of the Hermitian metric $h$ on line bundle $L$. The method that we adopt to establish the main theorem is the maximal principle by choosing a suitable auxiliary function.

Another interesting equation in K\"ahler geometry and mirror symmetry is the J-equation. Given K\"ahler metrics $\omega$ and $\chi$ on $X$, the J-equation is defined as
\[
\operatorname{\mathop{Tr}}_{\omega_\varphi} \chi =c
\]
where $\omega_\varphi=\omega+ \sqrt{-1}\partial\bar \partial \varphi$ is the solution to be found. The J-equation can also be rewritten in the following version
\[
\sum_{i=1}^n\frac{1}{\lambda_i}=c
\] where $\{\lambda_1,\cdots,\lambda_n\}$ are the eigenvalues of $\chi^{-1}\omega_\varphi$ as an endomorphism on $T^{1,0}X$.
The J-equation was introduced by Donaldson \cite{Don} from the viewpoint of  moment maps. In \cite{SW}, Song-Weinkove gave a necessary and sufficient condition  for the existence of the solution of J-equation. In \cite{CS}, Collins-Sz\'ekelyhidi considered the solvability of J-equation on toric varieties by the J-flow. In \cite{ChenG}, Chen gave a new numerical condition for the solvability of J-equation.

Observed by Collins-Jacob-Yau \cite{CJY}, for any fixed $n$-tuples $(\lambda_1, \cdots, \lambda_n)$ with all $\lambda_i>0$,
\[
\lim_{k\to \infty} \sum_{i=1}^n k(\frac{\pi}{2}- \arctan(k\lambda_i))= \sum_{i=1}^n \frac{1}{\lambda_i},
\]
i.e. the J-equation can be exactly seen as the limit of the deformed Hermitian-Yang-Mills equation. So we hope that the rigid theorem above should also hold for J-equation. In fact, we obtain the following theorem.
\begin{thm}
\label{thm-J}
    If $(X,\chi)$ is a compact K\"ahler manifold with non-negative orthogonal bisectional curvature and $\omega$ is a K\"ahler metric satisfying the J-equation, then the eigenvalues of $K=\chi^{-1}\omega$ are constants. In particular, if we also assume the orthogonal bisectional curvature is strictly positive at some point, then $\omega=c \chi$ for some constant $c$.
\end{thm}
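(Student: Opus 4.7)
My plan is to adapt the maximum-principle strategy of Theorem \ref{thm-main} to the J-equation, using that the J-operator $G(K) := \tr(K^{-1})$ with $K = \chi^{-1}\omega$ is smooth and convex on the positive-definite cone, so that its linearization at a solution
$$Lf \;:=\; \sum_{i} \frac{1}{\lambda_i^2}\, f_{i\bar i}$$
is uniformly elliptic in any local frame unitary for $\chi$ and simultaneously diagonal for $K$. I would take as auxiliary function the largest eigenvalue $u = \lambda_1$ of $K$ (smoothed by $u_q = \bigl(\sum_i \lambda_i^q\bigr)^{1/q}$ with $q\to\infty$ to handle multiplicity points) and aim to show $Lu \geq 0$ pointwise, so that the strong maximum principle forces $u$ to be constant.

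At an interior maximum $p_0$ of $u$, I would fix a holomorphic frame with $\chi_{i\bar j}(p_0) = \delta_{ij}$, $\pa\chi(p_0) = 0$ and $\omega_{i\bar j}(p_0) = \lambda_i \delta_{ij}$. Second-order eigenvalue perturbation then gives
$$u_{i\bar i}(p_0) \;=\; K_{1\bar 1, i\bar i}(p_0) \;+\; \sum_{k>1} \frac{|K_{1\bar k, i}(p_0)|^2 + |K_{k\bar 1, \bar i}(p_0)|^2}{\lambda_1 - \lambda_k}.$$
Multiplying by $1/\lambda_i^2$, summing in $i$, and combining three inputs---the K\"ahler identity $\omega_{a\bar b, c} = \omega_{c\bar b, a}$; the commutator formula for the Chern connection of $\chi$, which produces curvature contributions of the form $(\lambda_1 - \lambda_i)\, R^\chi_{1\bar 1 i\bar i}$ whenever one passes between derivatives of $\omega$ and of $K = \chi^{-1}\omega$; and the two-jet of the J-equation $\sum_j \lambda_j^{-1} = c$ in the direction $\pa_1 \pb_1$, which cancels the ``wrong-sign'' Hessian contribution---I expect the identity
$$Lu(p_0) \;=\; \bigl(\text{nonnegative gradient-square terms}\bigr) \;+\; \sum_{i\neq 1} \frac{\lambda_1-\lambda_i}{\lambda_i^2}\, R^\chi_{1\bar 1 i\bar i}(p_0).$$
The first block is nonnegative by convexity of $\sum \lambda_i^{-1}$ in $\lambda$, and the second is nonnegative because $\lambda_1 \geq \lambda_i$ and $R^\chi_{1\bar 1 i\bar i}(p_0) \geq 0$ by the orthogonal bisectional curvature hypothesis, so $Lu \geq 0$ and the strong maximum principle yields $\lambda_1 \equiv \mathrm{const}$ on $X$. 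Running the same computation on $\tr(K^q)$ for each $q \geq 1$ (or iterating on the next-largest eigenvalue) then shows every elementary symmetric function of the $\lambda_i$ is constant, hence each $\lambda_i$ is a constant on $X$.

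For the ``in particular'' statement, once each $\lambda_i$ is constant we have $Lu \equiv 0$; strict positivity of the orthogonal bisectional curvature at some $p$ then forces the curvature sum above to vanish only if $\lambda_1(p) = \lambda_i(p)$ for all $i$, and combined with global constancy this yields $\lambda_1 = \cdots = \lambda_n = n/c$, i.e.\ $\omega = (n/c)\chi$. The main obstacle I anticipate is the precise bookkeeping that produces the clean splitting above---in particular, verifying that the coefficient in front of $R^\chi_{1\bar 1 i\bar i}$ is exactly $(\lambda_1 - \lambda_i)/\lambda_i^2$---which requires careful derivative-chasing on $K = \chi^{-1}\omega$ using both the K\"ahler condition and the symmetry of the second derivative of the J-equation. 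The non-smoothness of $\lambda_1$ at multiplicity points is a secondary technical issue, handled by $L^q$-smoothing.
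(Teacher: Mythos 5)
Your plan is essentially correct, and the key identity you guessed does hold: with $\nabla$ the Chern connection of $\chi$, the K\"ahler (Codazzi) identity $\omega_{i\bar j,k}=\omega_{k\bar j,i}$ plus commutation give $\omega_{1\bar 1,i\bar i}=\omega_{i\bar i,\bar 1 1}+(\lambda_1-\lambda_i)R^{\chi}_{1\bar 1 i\bar i}$ at a $\chi$-unitary, $K$-diagonal point, and the $\pa_1\pb_1$-derivative of $\tr(K^{-1})=c$ turns $\sum_i\lambda_i^{-2}\omega_{i\bar i,\bar 1 1}$ into the nonnegative form $\sum_{j,k}(\lambda_j^{-1}\lambda_k^{-2}+\lambda_j^{-2}\lambda_k^{-1})|\omega_{j\bar k,1}|^2$, so your splitting with coefficient $(\lambda_1-\lambda_i)/\lambda_i^2$ is exactly right. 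The paper, however, takes a different and shorter route: it repeats the computation of Theorem \ref{thm-main} for the single smooth function $\log\frac{\eta^n}{\chi^n}=2\log\det(\chi^{-1}\omega)$ with $\eta_{i\bar j}=\omega_{i\bar l}\chi^{k\bar l}\omega_{k\bar j}$, obtaining $\Delta_\eta\log\frac{\eta^n}{\chi^n}=\frac{2}{\lambda_i\lambda_j\lambda_p^2}|\omega_{p\bar j,i}|^2+\frac{2}{\lambda_i^2\lambda_p^2}R_{i\bar ip\bar p}(\lambda_i-\lambda_p)^2\ge 0$; the maximum principle makes this function constant, the gradient block then forces $\nabla\omega\equiv 0$ outright (all eigenvalues constant, no iteration, no multiplicity issues), and the curvature block settles the strictly-positive case. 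Your version is the Guan--Li--Zhang-style eigenvalue argument the paper's own remark alludes to; what it costs is the regularity of $\lambda_1$, and note that your specific smoothing $u_q=(\sum_i\lambda_i^q)^{1/q}$ is not obviously a subsolution (the concave outer power contributes a negative $|\nabla \tr(K^q)|^2$ term), so you should run the argument on $\tr(K^q)$ itself, which is smooth and satisfies $L\,\tr(K^q)\ge 0$ with curvature weight proportional to $(\lambda_p-\lambda_i)(\lambda_p^{q+1}-\lambda_i^{q+1})\ge 0$; in fact $q=1$ already suffices, since constancy of $\tr K$ kills the convexity block, giving $\omega_{j\bar k,p}\equiv 0$ and hence the full conclusion. (Also, the numerator in your perturbation formula should read $|K_{1\bar k,i}|^2+|K_{k\bar 1,i}|^2$; as written your two terms coincide.)
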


Coming back to the corresponding parabolic flows to the deformed Hermitian-Yang-Mills equation and J-equation, we want to consider the self- shrinkers of these flows. In fact, we prove the following rigid theorems for the self-shrinkers on $\mathbb{C}^n$.

\begin{thm}
\label{Thm-selfshrinker}
If $u$ is a smooth function from $\mathbb{C}^n$ to $\mathbb{R}$ and satisfies the following equation
\begin{equation}
\label{eqn:self-shrinker}
\sum_{i=1}^n \arctan \lambda_i(z)-\theta_0=\frac{1}{2}(\langle z, \partial u\rangle+\langle \bar z, \bar \partial u \rangle)- u(z),
\end{equation}
where $\lambda_i$ is the eigenvalue of $\{ u_{i\bar{j}}\}$, then $u$ is  quadratic.
\end{thm}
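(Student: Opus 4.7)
The plan is to reduce the rigidity to a Liouville theorem for the bounded phase $\Theta(z) := F(D^2u)(z) = \sum_i\arctan\lambda_i(z)$, which automatically satisfies $|\Theta|\leq n\pi/2$. Write $F^{p\bar q}:=\partial F/\partial A_{p\bar q}\big|_{A=D^2u}$ for the linearization coefficients; in a unitary frame diagonalizing $u_{i\bar j}$ these are $\delta_{pq}/(1+\lambda_p^2)$, so $F^{p\bar q}$ is pointwise positive definite and bounded above by the identity. Introduce the drift-elliptic operator
\[
L := F^{p\bar q}\,\partial_p\partial_{\bar q} - \tfrac12\bigl(z^i\partial_i + \bar z^i\partial_{\bar i}\bigr).
\]

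First I would verify that $L\Theta=0$. Differentiating the self-shrinker equation once and comparing $\Theta_k=F^{p\bar q}u_{p\bar q k}$ with $-\tfrac12 u_k+\tfrac12(z^iu_{ik}+\bar z^iu_{\bar i k})$ yields the eigenfunction identity $Lu_k=-\tfrac12 u_k$. A short product-rule computation using this then gives $L(z^iu_i)=F^{p\bar q}u_{p\bar q}-z^iu_i$ (and its conjugate), while $Lu=F^{p\bar q}u_{p\bar q}-\tfrac12(z^iu_i+\bar z^iu_{\bar i})$. Applying $L$ to both sides of the rewritten equation $\Theta=\tfrac12(z^iu_i+\bar z^iu_{\bar i})-u+\theta_0$ and assembling these pieces, all terms cancel to produce $L\Theta=0$.

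The core of the argument is the Liouville step: $L\Theta=0$ together with $\Theta$ bounded forces $\Theta$ constant. Compute $L(\Theta^2)=2F^{p\bar q}\Theta_p\Theta_{\bar q}\geq 0$, so $\Theta^2$ is $L$-subharmonic, and observe the barrier $L|z|^2=\sum_p(1+\lambda_p^2)^{-1}-|z|^2\leq n-|z|^2$. For each $\epsilon>0$ set $\phi_\epsilon:=\Theta^2-\epsilon|z|^2$; then $L\phi_\epsilon\geq \epsilon(|z|^2-n)$. Since $\phi_\epsilon\to -\infty$ at infinity, $\sup\phi_\epsilon$ is attained at some $z_\epsilon$ where $L\phi_\epsilon(z_\epsilon)\leq 0$, forcing $|z_\epsilon|\leq \sqrt{n}$. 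The inequality $\Theta^2(w)-\epsilon|w|^2\leq \Theta^2(z_\epsilon)$ together with a subsequential limit $z_\epsilon\to z^\ast\in \overline{B(0,\sqrt{n})}$ shows $\Theta^2$ attains its global maximum at the interior point $z^\ast$. Since $L$ is locally uniformly elliptic, the strong maximum principle then forces $\Theta^2\equiv \mathrm{const}$ on the connected manifold $\mathbb{C}^n$, and hence $\Theta\equiv c$ for some constant $c$ by continuity.

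Finally, with $\Theta\equiv c$ the equation collapses to $\tfrac12(z^iu_i+\bar z^iu_{\bar i})-u=c-\theta_0$; setting $v:=u-(c-\theta_0)$ gives $(z^i\partial_i+\bar z^i\partial_{\bar i})v=2v$, i.e.\ $v(tz)=t^2v(z)$ for all $t>0$. A smooth $2$-homogeneous function on $\mathbb{C}^n\cong\mathbb{R}^{2n}$ is automatically a quadratic polynomial, since its second real partial derivatives are $0$-homogeneous and smooth at the origin, hence constant; therefore $u$ is quadratic. The main obstacle is the Liouville step: without any a priori Hessian bound, $L$ need not be uniformly elliptic, but the universal bound $F^{p\bar q}\leq I$ (from $1/(1+\lambda_p^2)\leq 1$) is precisely what allows the bare $|z|^2$ barrier to dominate the diffusion outside $B(0,\sqrt{n})$ and close the argument.
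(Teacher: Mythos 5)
Your argument is correct, and its core mechanism is the same as the paper's: reduce to a Liouville statement for the bounded phase $\Theta=\sum_i\arctan\lambda_i$, check that the drift-elliptic linearization $L=F^{p\bar q}\partial_p\partial_{\bar q}-\tfrac12\bigl(z^i\partial_i+\bar z^i\partial_{\bar i}\bigr)$ annihilates $\Theta$, and then force $\Theta$ to be constant using a quadratic barrier, the universal bound $F^{p\bar q}\le I$, and the strong maximum principle. Your implementation differs only cosmetically there (you perturb $\Theta^2-\epsilon|z|^2$ globally; the paper compares $\Theta$ against the exterior barrier $\epsilon r^2+\max_{\partial B_{\sqrt n}}\Theta$ on the complement of $B_{\sqrt n}$). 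Where you genuinely diverge is the endgame: the paper differentiates the equation twice to get $z^ku_{k i\bar j}+\bar z^ku_{\bar k i\bar j}=0$, concludes that $u_{i\bar j}$ is a constant Hermitian matrix, and then still has to show the pluriharmonic remainder $u-\sum C_{i\bar j}z^i\bar z^j$ is quadratic via a power-series argument; you instead feed $\Theta\equiv c$ back into the undifferentiated equation, obtain an Euler identity, and use the elementary fact that a smooth positively $2$-homogeneous function on $\mathbb{R}^{2n}$ has constant second derivatives and is therefore a homogeneous quadratic polynomial. This ending is shorter and bypasses the holomorphic-remainder analysis entirely. Two minor fixes: the shift should be $v:=u+(c-\theta_0)$, since with your sign one gets $\tfrac12 x\cdot\nabla v-v=2(c-\theta_0)$ rather than $0$ (harmless, but the homogeneity identity as written is off by a constant); and the subsequential limit $z_\epsilon\to z^\ast$ is unnecessary — once $|z_\epsilon|\le\sqrt n$, the inequality $\Theta^2(w)\le\epsilon|w|^2+\max_{\overline{B(0,\sqrt n)}}\Theta^2$ with $\epsilon\to0$ already places the global supremum at a point of the compact ball, after which the strong maximum principle (valid since $L$ has locally bounded coefficients and is locally uniformly elliptic by smoothness of $u$) applies as you say.
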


Any function $u$ satisfying ($\ref{eqn:self-shrinker}$) leads to an entire self-similar solution
\[
v(z,t)=-tu(\frac{z}{\sqrt{-t}})
\]
to the line-bundle mean curvature flow defined by Jacob-Yau in \cite{JY}
\[
\frac{\partial v}{\partial t}=\sum_{i=1}^n \arctan \lambda_i -\theta_0.
\]

This type of rigid results for self-shrinkers to parabolic flow have been studied in \cite{CCY,DLY,DX,HW,W1,W2}. In particular, Chau-Chen-Yuan proved the rigid theorem for entire smooth Lagrangian self-shrinkers over $\mathbb{R}^n$. Theorem \ref{Thm-selfshrinker} can be seen as a complex analogy to their results.

We also consider the self-shrinkers to the $J$-flow
\[
\frac{\partial u}{\partial t}= c- \operatorname{Tr_{\omega_\varphi}\chi}.
\]
We can prove the following rigid result for entire solutions to self-shrinkers with respect to the $J$-flow.

%\begin{thm}
%\label{thm-J-ss}
%If $u$ is a smooth pluri-subharmonic function on $\mathbb{C}^n$ with strictly positive complex hessian, i.e. \[
%\{u_{i\bar j}\}\geq CI
%\]
%for some constant $C$ and satisfies the following equation
%\begin{equation}
%\label{self-shrinker-J}
% c-\sum_{i=1}^n\frac{1}{\lambda_i(z)} =\frac{1}{2}\langle\langle z,\partial u\rangle + \langle\bar z,\bar \partial u\rangle \rangle- u(z)
%\end{equation}
%where $\lambda_i(z)$ is the eigenvalue of $\sqrt{-1}\partial \bar \partial u$, then $u$ is a quadratic polynomial.
%\end{thm}

\begin{thm}
\label{thm-J-ss}
$u:\mathbb{C}^n\to \mathbb{R}$ is a smooth  strictly pluri-subharmonic function satisfying the following equation
\begin{equation}
\label{self-shrinker-J}
 c-\sum_{i=1}^n\frac{1}{\lambda_i(z)} =\frac{1}{2} (\langle z,\partial u\rangle + \langle\bar z,\bar \partial u\rangle )- u(z)
\end{equation}
where $\lambda_i(z)$ is the eigenvalue of $\{ u_{i\bar{j}}\}$. If the complex hessian of $u$ satisfies
\[
\{u_{i\bar{j}}(z)\}\geq \frac{\sqrt{2n-1+\delta}}{|z|}I
\]
for any $\delta>0$ as $|z|\to \infty$, then $u$ is  quadratic.
\end{thm}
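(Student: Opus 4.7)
The plan is to reduce the theorem to showing that $\Theta(z):=\sum_{i=1}^{n}\lambda_i(z)^{-1}$ is a constant function on $\mathbb{C}^n$. Once this is established, equation \eqref{self-shrinker-J} becomes $\frac{1}{2}\bigl(\langle z,\partial u\rangle+\langle\bar z,\bar\partial u\rangle\bigr)=u+c-C$ for the corresponding constant $C$; setting $v:=u+(c-C)$ gives $(z^i\partial_i+\bar z^i\partial_{\bar i})v=2v$, so $v$ is homogeneous of degree two on $\mathbb{R}^{2n}$, and smoothness at the origin forces $v$, and therefore $u$, to be a quadratic polynomial.

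To prove that $\Theta$ is constant, I would work with the drift Laplacian
\[
\mathcal{L}:=F^{i\bar j}\partial_i\partial_{\bar j}-\tfrac{1}{2}\bigl(z^i\partial_i+\bar z^i\partial_{\bar i}\bigr),
\]
where $F^{i\bar j}:=-\partial\Theta/\partial u_{i\bar j}$ equals $\delta_{ij}/\lambda_i^2$ in the eigenbasis of $(u_{i\bar j})$; this is the natural elliptic operator for which J-flow self-shrinkers are stationary. The heart of the argument is to compute $\mathcal{L}\Theta$ using two expressions for $\Theta$. Differentiating \eqref{self-shrinker-J} twice yields $\Theta_{k\bar l}=-\frac{1}{2}\bigl(z^p u_{pk\bar l}+\bar z^p u_{k\bar p\bar l}\bigr)$, while the chain rule applied to $\Theta=\operatorname{tr}\bigl((u_{i\bar j})^{-1}\bigr)$ gives
\[
\Theta_{k\bar l}=-F^{i\bar j}u_{i\bar j,k\bar l}+G^{i\bar j,p\bar q}u_{i\bar j,k}u_{p\bar q,\bar l},
\]
where $G(A):=\operatorname{tr}(A^{-1})$, and the second term becomes nonnegative after taking the $F^{k\bar l}$-trace by the convexity of $G$ on positive-definite Hermitian matrices. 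Matching the two expressions and combining with the first-order identity $\Theta_{k}=\frac{1}{2}u_{k}-\frac{1}{2}(z^p u_{pk}+\bar z^p u_{k\bar p})$ should yield an inequality of the form $\mathcal{L}\Theta\geq 0$, modulo an explicit lower-order term.

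To handle the non-compactness of $\mathbb{C}^n$, the function $\phi(z):=|z|^2$ serves as a sharp barrier. A direct computation gives
\[
\mathcal{L}\phi=\sum_{i=1}^{n}\frac{1}{\lambda_i^2}-|z|^2,
\]
and the hypothesis $\lambda_i(z)\geq\sqrt{2n-1+\delta}/|z|$ for $|z|$ large yields $\sum 1/\lambda_i^2\leq n|z|^2/(2n-1+\delta)$, hence $\mathcal{L}\phi\leq-\frac{n-1+\delta}{2n-1+\delta}|z|^2$ at infinity. This is precisely where the threshold $\sqrt{2n-1+\delta}$ enters. Applying an Omori--Yau-type maximum principle for $\mathcal{L}$ to the test functions $\Theta\pm\varepsilon\phi$ and sending $\varepsilon\to 0$ then forces $\Theta$ to be constant.

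The main obstacle is the Bochner-type calculation of $\mathcal{L}\Theta$ and the extraction of a definite sign: the third-derivative terms of $u$ coming from differentiating \eqref{self-shrinker-J} must be matched against those arising from the chain rule for $G$, with the convex second variation of $G$ supplying the positivity. A secondary technical challenge is applying the maximum principle rigorously on $\mathbb{C}^n$; the constant $\sqrt{2n-1}$ is essentially optimal for the barrier computation above, which explains its appearance in the hypothesis.
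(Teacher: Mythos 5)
Your overall skeleton is the same as the paper's: show that $\Theta=\sum_i\lambda_i^{-1}$ satisfies a drift-elliptic equation, use the Hessian lower bound to build a barrier at infinity (your computation $\mathcal{L}\phi=\sum_i\lambda_i^{-2}-|z|^2\le-\frac{n-1+\delta}{2n-1+\delta}|z|^2$ is exactly where the threshold enters, as in the paper), force the global maximum of $\Theta$ into a compact set, and invoke the strong maximum principle; your concluding step is actually cleaner than the paper's, since once $\Theta$ is constant the equation \eqref{self-shrinker-J} says $z^i\partial_i v+\bar z^i\partial_{\bar i}v=2v$ for $v=u+(c-C)$, and a smooth function homogeneous of degree two is a quadratic polynomial (the paper instead first shows $u_{i\bar j}$ is constant and then analyzes the pluriharmonic remainder).

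The gap is in what you call the heart of the argument. The identity you need is not an inequality ``modulo a lower-order term'' obtained from a Bochner-type computation with the convexity of $G(A)=\operatorname{tr}(A^{-1})$; it is the exact equation $F^{k\bar l}\Theta_{k\bar l}-\tfrac12\bigl(z^p\Theta_p+\bar z^p\Theta_{\bar p}\bigr)=0$, and the route you sketch does not produce it. If you equate the chain-rule expression $\Theta_{k\bar l}=-F^{i\bar j}u_{i\bar j k\bar l}+G^{i\bar j,p\bar q}u_{i\bar j,k}u_{p\bar q,\bar l}$ with the expression coming from differentiating \eqref{self-shrinker-J} and then trace with $F^{k\bar l}$, you are left with the fourth-order contraction $F^{k\bar l}F^{i\bar j}u_{i\bar j k\bar l}$, which has no sign and which convexity of $G$ does not control; so as written the step stalls. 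The correct (and much simpler) derivation never uses the chain-rule Hessian of $\Theta$ at all: take $\partial_k\partial_{\bar l}$ of \eqref{self-shrinker-J} to get $\Theta_{k\bar l}=-\tfrac12\bigl(z^p u_{pk\bar l}+\bar z^p u_{k\bar p\bar l}\bigr)$ (this is \eqref{eqn:2-nd}), contract with $F^{k\bar l}$, and use only the first-order chain rule $F^{k\bar l}u_{k\bar l p}=-\Theta_p$, $F^{k\bar l}u_{k\bar l\bar p}=-\Theta_{\bar p}$; the drift equation then falls out with equality, which is exactly what the paper does (with $J=-\Theta$). With that identity in hand, your barrier argument is fine, though the appeal to an Omori--Yau principle is unnecessary: since $\Theta$ grows at most linearly while $\varepsilon|z|^2\to\infty$, the maximum of $\Theta-\varepsilon|z|^2$ lies in a fixed ball (an interior maximum outside it would contradict $\mathcal{L}(\Theta-\varepsilon\phi)>0$ there), and letting $\varepsilon\to0$ plus the strong maximum principle for the locally uniformly elliptic operator $\mathcal{L}$ finishes the argument. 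Also note the sign/conjugation bookkeeping in your claimed nonnegativity of the $G$-term would need care even if you wanted it, since the matrices $(u_{i\bar j,k})$ are not Hermitian in $(i,j)$; fortunately it is not needed.
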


Similarly, any function $u$ in the theorem above leads to an entire self-similar solution
\[
v(z,t)=-t u(\frac{z}{\sqrt{-t}})
\]
to the $J$-flow
\[
\frac{\partial v}{\partial t} = c-\sum_{i=1}^n \frac{1}{\lambda_i}.
\]

\begin{zj}
In \cite{HOW}, Huang-Ou-Wang considered the similar rigid theorem for real version $J$-type equation(Theorem 1.1 in \cite{HOW}). And they did not need the extra growing condition of the Hessian. The key observation in \cite{HOW} is that the self-shrinker equation has a good expression after the Legendre transformation.  However, there is no such complex analog Legendre transformation . This difficulty was also discussed in \cite{S-W}.
\end{zj}

We will organize this paper as following. In section 2, we give some background knowledge to the deformed Hermitian-Yang-Mills equations and prove Theorem \ref{thm-main} and Theorem \ref{thm-J}. In section 3, we prove the rest results for self-shrinkers.

{\bf Acknowledgements:} Both authors are grateful to Prof. Jiayu Li and Prof. Hikaru Yamamoto for helpful  discussions.

\section{Rigid Theorem for Deformed Hermitian-Yang-Mills Equation and J-equation}
\label{section2}
\subsection{Preliminaries}
Let $(X,\omega)$ be a compact K\"ahler manifold of complex dimension $n$ and $(L, h)$ be a holomorphic line bundle over $X$. $F$ is the Chern curvature with respect to the Hermitian metric $h$ on $L$. In local coordinates, we write
\[
\omega=\frac{\sqrt{-1}}{2}g_{i\bar j} dz^i\wedge d\bar z^j
\]
and
\[
F=\frac{1}{2} F_{i\bar j} dz^i\wedge d\bar z^j=-\frac{1}{2}\partial_i\partial_{\bar j} \log (h) dz^i\wedge d\bar z^j.
\]
Since $\omega$ is K\"ahler, we have the so-called second Bianchi equality
\[
\begin{split}
F_{i\bar j,k}&=-\partial_k\partial_{\bar j}\partial_{i}\log(h)+ \Gamma^{s}_{ik} F_{s \bar j}\\
&=-\partial_i\partial_{\bar j}\partial_{k}\log(h)+ \Gamma^{s}_{ki} F_{s \bar j}=F_{k\bar j,i},
\end{split}
\]
i.e. $F_{i\bar j,k}=F_{k\bar j,i}$.

Using $F$, we introduce a Hermitian(usually not K\"ahler) metric \[
\eta=\frac{\sqrt{-1}}{2}\eta_{\bar{k}j}dz^j\wedge d\bar z^k
\]
and an endomorphism $K$ of $T^{(1,0)}(X)$ that are defined by
\[ \eta_{\bar{k}j}=g_{\bar{k}j}+F_{\bar{k}l}g^{l\bar{m}}F_{\bar{m}j},
\]
and
\[
K:=\omega^{-1}F=g^{i\bar j}F_{k\bar j} dz^i\otimes \frac{\partial }{\partial z^k}.
\]
Then the complex-valued $(n,n)$-form $(\omega-F)^n$ can be locally written as
\begin{eqnarray*}
(\omega-F)^n&=&n!\det({g_{\bar{k}j}+\sqrt{-1}F_{\bar{k}j}})(\frac{i}{2})^n dz^1\wedge d\bar{z}^1\wedge\cdots\wedge dz^n\wedge d\bar{z}^n\\
&=& \det(g^{j\bar{k}})\det({g_{\bar{k}j}+\sqrt{-1}F_{\bar{k}j}})\omega^n\\
&=&\det{(I+\sqrt{-1}K)}\omega^n.
\end{eqnarray*}
Thus the complex function $\zeta$ can be expressed as following
\begin{equation}\label{zeta}
\zeta=\det(I+\sqrt{-1}K),
\end{equation}
and the argument of $\zeta$ is
\begin{equation}
\label{theta}
\theta=-i\log\frac{\det{(I+\sqrt{-1}K)}}{\sqrt{\det(I+K^2)}},
\end{equation}
where the endomorphism $I+K^2$ can be expressed locally as
\[I+K^2=g^{p\bar{q}}\eta_{\bar{q}l}\frac{\partial}{\partial z^p}\otimes dz^l.\]
We choose the normal coordinates around some point $p\in X$ so that the endomorphism $K$ is diagonal at $p$ with eigenvalues $\{\lambda_j\}$ for $1\leq j\leq n$, then the function $\theta$ can be expressed as
\[
\theta:= \arctan \lambda_1 +\cdots +\arctan \lambda_n.
\]
Here $\theta$ takes value in $(-\frac{n}{2}\pi, \frac{n}{2}\pi)$. Then the equation $\eqref{eqn-dHYM-angle}$ is equivalent to
\begin{equation}
\theta=\hat\theta (mod~~~ 2\pi).
\end{equation}
Hence the deformed Hermitian-Yang-Mills equation \eqref{eqn-dHYM-angle} can be written as
\begin{equation}
\label{arctan-theta}
\sum_{i=1}^n \arctan \lambda_i =\theta.
\end{equation}
As described in \cite{CXY}, we also remark the constant $\theta$ in \eqref{arctan-theta} can be obtained by considering the ``winding angle'' of
\[
\gamma(t) =\int_X e^{-t\sqrt{-1} \omega} Ch(L)
\]
as $t$ runs from $+\infty$ to $1$.

Taking the derivative of (\ref{theta}) , we get (cf. Lemma 3.3 in \cite{JY} ) the first variation of $\theta$ as following
\begin{eqnarray}\label{deltatheta}
\delta\theta=Tr((I+K^2)^{-1}\delta K).
\end{eqnarray}
Thus if we take the derivatives $\partial_j$ on both sides of the deformed Hermitian-Yang-Mills equation, we have the equality
\begin{eqnarray}\label{eqn-1st-derivative}
0=\partial_j\theta=Tr(I+K^2)^{-1}\nabla_j K)=\eta^{p\bar{q}}g_{\bar{q}l}\nabla_j(g^{l\bar{m}}F_{\bar{m}p})=\eta^{p\bar{q}}\nabla_j F_{\bar{q}p}.
\end{eqnarray}
where $\{\eta^{p\bar q}\}$ is the inverse matrix of $\{\eta_{p\bar{q}}\}$.

\subsection{Proof of Theorem \ref{thm-main}}
With notations above, we give a proof of Theorem \ref{thm-main} in this subsection.
\begin{proof}
  It is easy to see that $\log \frac{\eta^n}{\omega^n}$ is a well-defined function on $X$. Indeed, it is nothing but $\log \det(I+K^2)$ if we view the endomorphism $K$ as a matrix function on $X$.
  We use the Hermitian metric $\eta$ to define the following Laplacian on $C^{\infty}(X)$
\[
\Delta_{\eta}=\eta^{i\bar j} \nabla_i \nabla_{\bar j}: C^{\infty}(X) \to \mathbb{R},
\]
where $\nabla$ is the covariant derivative with respect to $\omega$. This operator is nothing but the linearization operator of the fully nonlinear second order operator
\[
\Theta(u)=\sum_{i=1}^{n}  \arctan\lambda_{i}(u_{\alpha\bar \beta}).
\]
We remark that the operator $\Delta_{\eta}$ is elliptic as long as $F$ is bounded.
Then we compute $\Delta_{\eta} \log \frac{\eta^n}{\omega^n}$ step by step.
  \begin{equation}
  \begin{split}
          \Delta_{\eta}\log \frac{\eta^n}{\omega^n} =&\eta^{p\bar q}(\eta^{i\bar j}\eta_{i\bar j,p})_{\bar q}\\
      =&-\eta^{p \bar q} \eta^{i \bar t} \eta^{s \bar j} \eta_{s \bar t, \bar q} \eta_{i \bar j, p} + \eta^{p \bar q} \eta^{i \bar j} \eta_{i \bar j, p \bar q}.
  \end{split}
  \end{equation}
  By the expression of $\eta$, we have
  \[
\eta_{i\bar j,p}=F_{i \bar b,p}g^{a\bar b} F_{a \bar j}+ F_{i \bar b}g^{a\bar b} F_{a \bar j,p}
\]
and
\[
\begin{split}
\eta_{i\bar j, p\bar q }=&(g_{i\bar j} + F_{i \bar t} g^{s\bar t} F_{s\bar j})_{,p\bar q}\\
=&F_{i\bar t, p}g^{s\bar t} F_{s\bar j,\bar q} +F_{i\bar t,\bar q} g^{s\bar t} F_{s\bar j,p} +F_{i\bar t,p \bar q}g^{s\bar t}F_{s\bar j} +F_{i\bar t} g^{s\bar t} F_{s\bar j, p\bar q}.
\end{split}
\]
Taking trace with respect to $\eta$, we obtain
\begin{equation}
  \begin{split}
    \Delta_{\eta}\log \frac{\eta^n}{\omega^n} =&-\eta^{p \bar q} \eta^{i \bar t} \eta^{s \bar j} \eta_{s \bar t, \bar q} \eta_{i \bar j, p} + \eta^{p \bar q} \eta^{i \bar j} (F_{i\bar t, p}g^{s\bar t} F_{s\bar j,\bar q} +F_{i\bar t,\bar q} g^{s\bar t} F_{s\bar j,p} )\\
      &+\eta^{p \bar q} \eta^{i \bar j} (F_{i\bar t,p \bar q}g^{s\bar t}F_{s\bar j} +F_{i\bar t} g^{s\bar t} F_{s\bar j, p\bar q}).
  \end{split}
\end{equation}
By the second Bianchi equality and communicating law of the covariant derivatives, we have
\[
\begin{split}
F_{i\bar t, p\bar q}&= F_{p\bar t,i \bar q}\\
&=F_{p\bar t,\bar q i} + F_{a\bar t} g^{a\bar b} R_{p\bar b i \bar q}- F_{p\bar b} g^{a\bar b} R_{a\bar t i \bar q}\\
&=F_{p\bar q,\bar t i} + F_{a\bar t} g^{a\bar b} R_{p\bar b i \bar q}- F_{p\bar b} g^{a\bar b} R_{a\bar t i \bar q}.
\end{split}
\]
Thus, we have
\begin{equation}
\label{eqn-Lap1}
\begin{split}
        \Delta_{\eta}\log \frac{\eta^n}{\omega^n}
      =&-\eta^{p \bar q} \eta^{i \bar t} \eta^{s \bar j} \eta_{s \bar t, \bar q} \eta_{i \bar j, p} + \eta^{p \bar q} \eta^{i \bar j} (F_{i\bar t, p}g^{s\bar t} F_{s\bar j,\bar q} +F_{i\bar t,\bar q} g^{s\bar t} F_{s\bar j,p} )\\
      &+\eta^{p \bar q} \eta^{i \bar j} g^{s\bar t}F_{s\bar j}(F_{p\bar q,\bar t i} + F_{a\bar t} g^{a\bar b} R_{p\bar b i \bar q}- F_{p\bar b} g^{a\bar b} R_{a\bar t i \bar q} )\\
      &+\eta^{p \bar q} \eta^{i \bar j} g^{s\bar t}F_{i\bar t}(F_{p\bar q,\bar j s} + F_{a\bar j} g^{a\bar b} R_{p\bar b s \bar q}- F_{p\bar b} g^{a\bar b} R_{a\bar j s \bar q} ).
\end{split}
\end{equation}
On the other hand, taking derivative $\nabla_p$ on the both sides of \eqref{eqn-1st-derivative}, we get
\begin{equation}
\label{eqn-2nd-derivative}
\begin{split}
  \eta^{i\bar j}F_{i\bar j, \bar q p }&=-\eta^{i\bar j}_{\mathrel{\phantom{i\bar j}},p} F_{i\bar j,\bar q}\\
  &=\eta^{i\bar t}\eta^{s\bar j} \eta_{s\bar t,p} F_{i\bar j,\bar q}\\
  &=\eta^{i\bar t}\eta^{s\bar j} F_{i\bar j,\bar q} g^{a\bar b}(F_{s\bar b,p}F_{a \bar t}+ F_{s\bar b}F_{a\bar t,p}).
\end{split}
\end{equation}
Applying \eqref{eqn-1st-derivative} and \eqref{eqn-2nd-derivative} to the equation \eqref{eqn-Lap1}, we have
\[
\begin{split}
        \Delta_{\eta}\log \frac{\eta^n}{\omega^n}
      =&-\eta^{p \bar q} \eta^{i \bar t} \eta^{s \bar j} g^{a\bar b}g^{c\bar d}(F_{s \bar b,\bar q} F_{a \bar t}+ F_{s \bar b} F_{a \bar t,\bar q}) (F_{i \bar d,p} F_{c \bar j}+ F_{i \bar d} F_{c \bar j,p}) \\
      &+ \eta^{p \bar q} \eta^{i \bar j} (F_{i\bar t, p}g^{s\bar t} F_{s\bar j,\bar q} +F_{i\bar t,\bar q} g^{s\bar t} F_{s\bar j,p} )\\
      &+\eta^{i \bar j} g^{s\bar t}F_{s\bar j}\eta^{p\bar d}\eta^{c\bar q} F_{p\bar q,\bar t} g^{a\bar b}(F_{c\bar b,i}F_{a \bar d}+ F_{c\bar b}F_{a\bar d,i})\\
      &+\eta^{p \bar q} \eta^{i \bar j} g^{s\bar t}F_{s\bar j}( F_{a\bar t} g^{a\bar b} R_{p\bar b i \bar q}- F_{p\bar b} g^{a\bar b} R_{a\bar t i \bar q} )\\
      &+\eta^{i \bar j} g^{s\bar t}F_{i\bar t}\eta^{p\bar d}\eta^{c\bar q} F_{p\bar q,\bar j} g^{a\bar b}(F_{c\bar b,s}F_{a \bar d}+ F_{c\bar b}F_{a\bar d,s})\\
      &+\eta^{p \bar q} \eta^{i \bar j} g^{s\bar t}F_{i\bar t}( F_{a\bar j} g^{a\bar b} R_{p\bar b s \bar q}- F_{p\bar b} g^{a\bar b} R_{a\bar j s \bar q} ).
\end{split}
\]
For convenience, we take the normal coordinates around some point $p\in X$ such that $g_{i\bar j}(p)=\delta_{ij}$ and $F_{i\bar j}(p)=\lambda_i \delta_{ij}$. In particular, under this coordinates system, $\eta_{i\bar j}(p)=\theta_i \delta_{ij}$ where $\theta_i =1+\lambda_i^2$. We also denote $\theta^i=1/\theta_i$.

Then at $p\in X$, we have
\begin{equation}
    \begin{split}
      \Delta_{\eta}\log \frac{\eta^n}{\omega^n}(p)=&-\theta^p \theta^i \theta^j (\lambda_i+\lambda_j)^2F_{i\bar j,\bar p}F_{j\bar i,p}\\
      &+\theta^p\theta^i F_{i\bar t,p} F_{t\bar i,\bar p}+ \theta^p\theta^i F_{i\bar t,\bar p} F_{t\bar i,p}\\
      &+2\theta^i\theta^p\theta^q \lambda_i(\lambda_p+\lambda_q) F_{p\bar q,\bar i}F_{q\bar p,i}\\
      &+2\theta^i\theta^p\lambda_i^2 R_{i\bar ip \bar p} -2\theta^i\theta^p \lambda_i\lambda_p R_{i\bar i p\bar p}.
    \end{split}
\end{equation}
Inserting the formula $\theta^i(1+\lambda_i^2)=1$ to the second and third items, we get
  \begin{align*}
          \Delta_{\eta}\log \frac{\eta^n}{\omega^n}(p)
      =&-\theta^p \theta^i \theta^j (\lambda_i^2 +2\lambda_i\lambda_{ j}+ \lambda_j^2)F_{i\bar j,\bar p}F_{j\bar i,p}\\
      &+\theta^p\theta^i \theta^j(1+\lambda_j^2) F_{i\bar j,\bar p} F_{j\bar i, p}+ \theta^p\theta^i \theta^j(1+\lambda_j^2) F_{i\bar j,\bar p} F_{j\bar i,p}\\
      &+2\theta^i\theta^j\theta^p \lambda_i\lambda_j F_{j\bar i,\bar p}F_{i\bar j,p}+ 2\theta^i\theta^j\theta^p \lambda_i\lambda_j F_{i\bar j,\bar p}F_{j\bar i,p}\\
      &+2\sum_{i<p}(\theta^i\theta^p\lambda_i^2 R_{i\bar ip \bar p} -2\theta^i\theta^p \lambda_i\lambda_p R_{i\bar i p\bar p}+\theta^i\theta^p\lambda_p^2 R_{i\bar ip \bar p})\\
      =&2\theta^i \theta^j \theta^p(1+\lambda_i \lambda_j)F_{p\bar j,\bar i} F_{j\bar p,i} +2\theta^i\theta^p R_{i\bar i p\bar p}(\lambda_i-\lambda_p)^2.
  \end{align*}

Since $-\frac{1}{C}\omega<\sqrt{-1}F<C\omega$, we know that $\lambda_i\lambda_j>-1$. From  $R_{i\bar i p\bar p}\geq 0$, we know that $\log \frac{\eta^n}{ \omega^n}$ is a subharmonic function on $X$. By maximal principle, we know that $\log \frac{\eta^n}{\omega^n}$ is constant. Hence, $F_{i\bar j ,p}=0$, i.e. all $\lambda_i$ are constants(maybe different for each $i$).

In particular, if $R_{i\bar i p\bar p}$ is strictly positive at some point $p_0\in M$, then $\lambda_i=\lambda_p$ for any $i\neq p$, i.e. $\sqrt{-1}F = C\omega$.
\end{proof}

\begin{zj}
Jacob-Yau \cite{JY} can prove the existence of dHYM metrics on ample line bundle over a compact K\"ahler manifold with non-negative orthogonal bisectional curvature under the assumption of hypercritical phase ($\theta\in (\frac{n-1}{2}\pi, \frac{n}{2}\pi)$) using the line bundle mean curvature flow. In particular, the hypercritical phase condition implies $\sqrt{-1}F$ is positive.
\end{zj}

More pricisely, we have the following property for K\"ahler manifold with nonnegative orthogonal bisectional holomorphic curvature which is strictly positive at some point.

\begin{mt}
If a compact K\"ahler manifold $(M,\omega)$ has nonnegative orthogonal bisectional curvature and its orthogonal bisectional curvature is strictly positive at some point $p\in M$, then the second betti number of $M$ is $1$, i.e.
\[
\dim H^{1,1}(M,\mathbb{R})=1.
\]
\end{mt}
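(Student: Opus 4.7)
The plan is to combine Hodge theory with the classical Bochner--Weitzenb\"ock formula for harmonic $(1,1)$-forms on K\"ahler manifolds, in the style of Bishop--Goldberg and Howard--Smyth--Wu \cite{HSW}. By the Hodge decomposition every class in $H^{1,1}(M,\mathbb{R})$ admits a unique harmonic real $(1,1)$ representative $\alpha$, and since $[\omega]\ne 0$ the lower bound $\dim H^{1,1}(M,\mathbb{R})\ge 1$ is automatic. It therefore suffices to show that any such harmonic $\alpha$ must be a constant multiple of $\omega$.

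Diagonalise $\alpha$ with respect to $\omega$ at each point and write its eigenvalues as $\mu_1(x),\dots,\mu_n(x)$. Following the same template as the calculation for $F$ in the proof of Theorem~\ref{thm-main} --- expanding $0=\Delta_{\bar\partial}\alpha=\nabla^*\nabla\alpha+\mathcal{R}(\alpha)$, using the harmonicity conditions $\alpha_{i\bar j,k}=\alpha_{k\bar j,i}$ and $g^{j\bar i}\nabla_{\bar i}\alpha_{k\bar j}=0$, commuting covariant derivatives to pick up curvature, and rearranging via the Ricci identity $R_{i\bar i}=\sum_jR_{i\bar i j\bar j}$ together with the symmetrisation $\sum_{i,j}R_{i\bar i j\bar j}\mu_i^2-\sum_{i,j}R_{i\bar i j\bar j}\mu_i\mu_j=\tfrac12\sum_{i,j}R_{i\bar i j\bar j}(\mu_i-\mu_j)^2$ --- one arrives at the Bochner identity
\[
0 \;=\; \int_M |\nabla\alpha|^2\,\frac{\omega^n}{n!} \;+\; \frac{1}{2}\int_M \sum_{i,j=1}^{n} R_{i\bar i j\bar j}\,(\mu_i-\mu_j)^2\,\frac{\omega^n}{n!}.
\]
Only the off-diagonal orthogonal bisectional components $R_{i\bar i j\bar j}$ with $i\ne j$ contribute, since the $i=j$ terms carry the factor $(\mu_i-\mu_i)^2=0$. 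Under the hypothesis $R_{i\bar i j\bar j}\ge 0$ for $i\ne j$, both summands are nonnegative and must vanish identically: the first gives $\nabla\alpha\equiv 0$, and the second, evaluated at the point $p$ of strict positivity, forces $\mu_1(p)=\cdots=\mu_n(p)=:c$, so that $\alpha_p=c\,\omega_p$. Since $\alpha-c\omega$ is then a parallel section of $\Lambda^{1,1}T^*M$ vanishing at $p$ and $M$ is connected, $\alpha\equiv c\,\omega$ on all of $M$.

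This proves $H^{1,1}(M,\mathbb{R})=\mathbb{R}\cdot[\omega]$, giving $\dim H^{1,1}(M,\mathbb{R})=1$; the further identification with $b_2(M)$ requires also $H^{2,0}(M,\mathbb{C})=0$, which follows from an entirely parallel Bochner argument applied to holomorphic $(2,0)$-forms, the analogous rearrangement again singling out only the off-diagonal bisectional components. The principal obstacle is the derivation of the displayed Bochner identity in its precise squared-difference form; once that identity is in hand, the rigidity argument is effectively identical to the one for $F$ carried out in Section~\ref{section2}, and the squared-difference structure is precisely what allows the weaker hypothesis of nonnegative orthogonal bisectional (as opposed to full bisectional) curvature to suffice.
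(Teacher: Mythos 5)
Your core argument for $\dim H^{1,1}(M,\mathbb{R})=1$ is correct and is essentially the paper's own. Note in fact that the two choices of representative coincide: the paper's $\alpha_u=\alpha+\sqrt{-1}\partial\bar\partial u$ is closed with constant trace $\operatorname{Tr}_\omega\alpha_u=c$, and by the K\"ahler identity $\partial^*\alpha_u=-\sqrt{-1}\,\bar\partial(\Lambda\alpha_u)$ a closed $(1,1)$-form with constant trace is exactly harmonic, so the paper's representative is your Hodge representative constructed by solving a Laplace equation instead of invoking Hodge theory. The paper's pointwise identity $\Delta_\omega|\alpha_u|^2_\omega=2|\nabla_\omega\alpha_u|^2_\omega+R_{i\bar i p\bar p}(\lambda_i-\lambda_p)^2$ is the local form of your integrated Bochner identity (maximum principle in place of integration by parts), and the rigidity step at the point of strictly positive orthogonal bisectional curvature, followed by parallel transport of $\alpha-c\omega$, is identical.

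The one genuine gap is your closing claim that $H^{2,0}(M,\mathbb{C})=0$ follows from ``an entirely parallel Bochner argument'' in which only off-diagonal orthogonal bisectional components survive. For a holomorphic $(2,0)$-form $\sigma$ the Weitzenb\"ock curvature term is a \emph{Ricci} contraction: bringing $\sigma$ to its unitary normal form $\sum_k s_k\, e^{2k-1}\wedge e^{2k}$, the term is $\sum_k\bigl(\Ric(e_{2k-1},\bar e_{2k-1})+\Ric(e_{2k},\bar e_{2k})\bigr)s_k^2$, and $\Ric(e_i,\bar e_i)=\sum_j R_{i\bar i j\bar j}$ contains the holomorphic sectional curvatures $R_{i\bar i i\bar i}$, which are \emph{not} controlled by nonnegative orthogonal bisectional curvature --- the paper itself recalls (citing \cite{GZ,Mok,SY}) that such manifolds may have holomorphic sectional curvature strictly negative somewhere. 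So the diagonal terms do not cancel in the $(2,0)$ case and that step fails as stated; the $i=j$ cancellation in your $(1,1)$ computation comes from the difference of the two curvature contractions there, which has no analogue for $(2,0)$-forms. This does not affect the proposition as the paper reads it (the ``i.e.'' identifies the claim with $\dim H^{1,1}(M,\mathbb{R})=1$, which is all the paper proves), but if one insists on the literal statement about $b_2$ then $h^{2,0}=0$ must be obtained by a different route (e.g. via the classification theory for nonnegative orthogonal bisectional curvature), not by the Bochner computation you describe.
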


\begin{proof}
  For any $[\alpha]\in H^{1,1}(M,\mathbb{R})$, we know that there exists a smooth function on $M$ such that
  \[
  \operatorname{\mathop{Tr}}_{\omega}(\alpha +\sqrt{-1}\partial \bar \partial u)=c
  \]
  where $c=\frac{n\int_M \alpha\wedge \omega^{n-1}}{\int_M \omega^n}$ is a constant dependent only on $[\alpha]$ and $[\omega]$. By direct computation and communicating law of covariant derivatives, we have
%  \[
%  \begin{split}
%  \Delta_{\omega} \operatorname{\mathop{Tr}}_{\omega}(\alpha +\sqrt{-1}\partial \bar \partial u)=g^{i\bar j}(g^{k\bar l} \alpha_{k\bar l})_{i\bar j}=g^{k\bar l}\alpha_{k\bar l i\bar j}=2 R_{i\bar ij\bar j}(\lambda_i-\lambda_j)^2
%  \end{split}

%  \]
\[
\begin{split}
    \Delta_{\omega} \left|\alpha_u\right|^2_{\omega}&=2\langle\Delta_{\omega}\alpha_u,\alpha_u\rangle_{\omega}+2\left|\nabla_{\omega} \alpha_u\right|^2_{\omega}\\
    &=2g^{i\bar j}g^{k \bar n}g^{m\bar l}\alpha_{m\bar n}(\alpha_{i\bar j,\bar l k}+\alpha_{p\bar l} R_{i\bar p k\bar j}-\alpha_{i\bar p}R_{p\bar lk\bar j})+2\left|\nabla_{\omega} \alpha_u\right|^2_{\omega}.
\end{split}
\]
where $g_{i\bar j}$ and $\alpha_{i\bar j}$ are the coefficients with respect to $\omega$ and $\alpha_u=\alpha+\sqrt{-1}\partial\bar\partial u$. Since $g^{i\bar j} \alpha_{i\bar j}=c$, we know
\[
g^{i\bar j} \alpha_{i\bar j,k\bar l}=0.
\]
Then we have
\[
\begin{split}
    \Delta_{\omega} \left|\alpha_u\right|^2_{\omega}&=2g^{i\bar j}g^{k \bar n}g^{m\bar l}\alpha_{m\bar n}(\alpha_{p\bar l} R_{i\bar p k\bar j}-\alpha_{i\bar p}R_{p\bar lk\bar j})+2\left|\nabla_{\omega} \alpha_u\right|^2_{\omega}\\
    &=R_{i\bar i p\bar p}(\lambda_i-\lambda_p)^2 +2\left|\nabla_{\omega} \alpha_u\right|^2_{\omega}
\end{split}
\]
  where $\lambda_i$ are the eigenvalues of $\omega^{-1}\alpha_u$ as an endomorphism on $T^{1,0}M$. By our assumption $R_{i\bar i j\bar j}\geq 0$, we know $\nabla_{\omega} \alpha_u=0$, i.e. all $\lambda_i$ are constants. Since $R_{i\bar i j\bar j}$ is strictly positive at some point, we know $\lambda_i=C$ for some constant $C$, i.e. $\alpha_u=C\omega$. In conclusion, $[\alpha]=C[\omega]$, i.e. $\dim H^{1,1}(M,\mathbb{R})=1$.
\end{proof}

\begin{zj}
In \cite{GLZ}, Guan-Li-Zhang consider a uniqueness result of
\[
F(Ric)=f(\lambda_1(Ric),\cdots,\lambda_n(Ric))=0
\]
for some fully nonlinear function $F$ under the assumption that non-negative orthogonal bisectional curvature is strictly positive at some point. They require $f$ satisfies one of the following conditions
\begin{enumerate}
  \item $f$ is concave,
  \item as a function acted on the eigenvalues of $A$, $f$ satisfies that if $\forall 0\leq l\leq n$, for all $0\leq \lambda_1\leq \cdots \leq \lambda_n$ and $\lambda_i>0$, $\forall i\geq n-l+1$, there holds
  \[
  \begin{split}
  \sum_{j,k=n-l+1}^n \ddot{f}^{jk}(A) X_{j\bar j} X_{k\bar k} &+2 \sum_{n-l+1\leq j<k} \frac{\dot f^j-\dot f^k}{\lambda_j-\lambda_k}\left|X_{j\bar k}\right|^2\\
   &+\sum_{i,k=n-l+1}^n \frac{\dot f^i(A)}{\lambda_k}\left|X_{i\bar k}\right|^2 \geq 0
  \end{split}
  \]
  for every Hermitian matrix $X=(X_{j\bar k})$ with $X_{j\bar k}=0$ if $j\leq n-l$.
\end{enumerate} Here  $\cdot$ means the derivatives of $f$ about the eigenvalues of $A$.

We can check that $f$ in our theorem does not satisfy these two conditions above if we assume $-\frac{1}{C} \omega< \sqrt{-1}F< C\omega$ for $C>1$. In fact,
\begin{enumerate}
  \item if at least one of $\lambda_i<0$, then $f$ is not concave, since
\[
\ddot f(\cdots,\lambda_i+t,\cdots)|_{t=0} = \frac{-2\lambda_i}{(1+\lambda_i^2)^2} >0.
\]
  \item if we assume $\lambda_n>1$ and $\lambda_i=0$ for $i\leq n-1$, $X_{j\bar{k}}=0$ for $j\leq n-1, k\leq n-1$ and $X_{n\bar{n}}=1$, then the left-hand side of the inequality in condition (2) is $\frac{1-\lambda_n^2}{\lambda_n(1+\lambda_n^2)^2}$ which is negative.
\end{enumerate}
\end{zj}

\subsection{Proof of Theorem \ref{thm-J}}
We can prove Theorem \ref{thm-J} by computing the same auxiliary function as in the proof of Theorem \ref{thm-main}. In this subsection, we will take
\[
\eta= \frac{\sqrt{-1}}{2}\eta_{i\bar j}dz^i \wedge d \bar z^j=\frac{\sqrt{-1}}{2}\omega_{i\bar l} \chi^{k\bar l} \omega_{k \bar j} dz^i\wedge d \bar z^j
\]
where $\omega=\frac{\sqrt{-1}}{2} \omega_{i\bar j}dz^i\wedge d\bar z^j$ and $\chi=\frac{\sqrt{-1}}{2} \chi_{i\bar j}dz^i\wedge d\bar z^j$.
  We choose the normal coordinates  near a point $p\in M$ such that
  \[
  \chi(p)=\frac{\sqrt{-1}}{2}\delta_{ij}dz^i \wedge d\bar z^j \text{ and } \omega(p)=\frac{\sqrt{-1}}{2}\delta_{ij}\lambda_i dz^i \wedge d\bar z^j.
  \]
  In particular, we have $\eta_{i\bar j}(p)=\delta_{ij}\lambda_i^2$. We also denote $\Delta_{\eta}=\eta^{i\bar j} \nabla_{\bar j} \nabla_i$ where $\nabla $ is the covariant derivative with respect to $\chi$. Indeed, $\Delta_\eta$ is the linearization operator of the $J$-equation. By direct computation, we have
  \[
  \Delta_{\eta} \log\frac{\eta^n}{\chi^n}= \frac{2}{\lambda_i\lambda_j\lambda_p^2} \omega_{p\bar j,i} \omega_{j\bar p,\bar i} +\frac{2}{\lambda_i^2\lambda_p^2} R_{i\bar ip\bar p}(\lambda_i-\lambda_p)^2\geq 0.
  \]
  Same arguments as in the proof of Theorem \ref{thm-main} implies the result needed.

\begin{zj}
  Theorem \ref{thm-J} can be proved by the same argument in \cite{GLZ} since the operator
  \[
  f(A)=\sum_{i=1}^n \frac{1}{\lambda_i}
  \]
  is concave at any positive Hermitian matrix $A$. So our proof can be seen as a new proof of the result in \cite{GLZ} when the operator is the special one above.
\end{zj}

\section{Rigid Results for Self-Shrinkers}
\label{section3}

In this section we choose the suitable barrier function to prove Theorem \ref{Thm-selfshrinker}.

\begin{proof}
We denote $\Theta(z)=\displaystyle\sum_{i=1}^n \arctan \lambda_i(z)$. Then we have
\begin{equation}
\label{eqn:Theta-1st}
\Theta_{\bar i}=\eta^{k\bar l} u_{k\bar l \bar i}, \Theta_{i}=\eta^{k\bar l} u_{k\bar l i}.
\end{equation}
On the other hand, by taking derivatives on the right side of \eqref{eqn:self-shrinker}, we get that
\begin{equation}
\label{eqn:2-nd}
(\frac{1}{2}(\langle z,\partial u\rangle +\langle \bar z,\bar \partial u\rangle)-u)_{i\bar j}=\frac{1}{2}(z_k u_{ki\bar j}+\bar z_k u_{\bar k i\bar j}).
\end{equation}
Combining the equations above, we have
\[
\eta^{i\bar j}\Theta_{i\bar j}-\frac{1}{2}(\langle z,\partial \Theta\rangle+\langle \bar z, \bar \partial \Theta \rangle)=0.
\]
We construct a barrier function as follow
\[
w(r)= \varepsilon r^{2} + \displaystyle\max_{\partial B_{r_0}}\{\Theta\}
\]
where $r_0=\sqrt{n}$ and $\varepsilon\in \mathbb{R}^+$. By direct computation, we have
\begin{equation}
\label{ineqn:w-2nd}
\frac{1}{4}(w_{rr}+\frac{2n-1}{r}w_r) -\frac{r}{2} w_r =(\frac{n}{2r}-\frac{r}{2})w_r\leq 0
\end{equation}
for any $r\geq r_0$. Furthermore,
\[
\sqrt{-1}\partial \bar \partial w=\varepsilon \sum\sqrt{-1}dz^i\wedge d\bar{z}^i >0.
\]
Hence, for $z$ outside of the ball $B_{r_0}$, we have
\[
\eta^{i\bar j} w_{i \bar j} -\frac{1}{2} (\langle z,\partial w\rangle+\langle \bar z, \bar \partial w \rangle) \leq \frac{1}{4}\Delta w -\frac{1}{2}(\langle z,\partial w\rangle+\langle \bar z, \bar \partial w \rangle)\leq 0
\]
where we use the inequality \eqref{ineqn:w-2nd} and the fact that $\eta^{i\bar j}\leq I$.

So far we have
\[
\eta^{i\bar j} w_{i \bar j} -\frac{1}{2} (\langle z,\partial w\rangle+\langle \bar z, \bar \partial w \rangle) \leq \eta^{i\bar j} \Theta_{i \bar j} -\frac{1}{2} (\langle z,\partial \Theta\rangle+\langle \bar z, \bar \partial \Theta \rangle)
\]
if $\left|z\right|\geq r_0$. On the other hand, we know
\[
w(r_0)=\varepsilon r_0^2 +\displaystyle\max_{\partial B_{r_0}}\{\Theta\} \geq \Theta \text{ on } \partial B_{r_0}
\]
and
\[
w(|x|)>\Theta(x) \text{ when }|x|\to \infty
\]
since $w(|x|)\to\infty $ as $|x|\to \infty$ while $\Theta $ is bounded. Hence $w\geq \Theta$ outside $B_{r_0}$ by maximum principle. As $\varepsilon$ tends to $0$, we know that $\Theta$ attains its global maximum on $\mathbb{C}^n$ in the closure of $B_{r_0}$. By the strong maximum principle, we know $\Theta$ is a constant.

According to the equation \eqref{eqn:2-nd}, we know that $z\cdot \partial u_{i\bar j}+\bar z\cdot\bar\partial u_{i\bar j}=0$. That implies that $r(u_{i\bar j})_r=0$. Thus $u_{i\bar j}=c$ except for $0$. However, $u_{i\bar j}$ is smooth on the whole $\mathbb{C}^n$, therefore $\{u_{i\bar j}\}=\{C_{i\bar j}\}$ on $\mathbb{C}^n$ where $\{C_{i\bar j}\}$ is a constant Hermitian matrix. Then we know that the function
\[
v=u-\sum_{i,j=1}^n C_{i\bar j} z^i \bar z^j
\]
is a pluriharmonic function, i.e. $\partial \bar \partial v=0$. In particular, there exists a holomorphic $f$ on $\mathbb{C}^n$ such that
\[
u=\sum_{i,j=1}^n C_{i\bar j}z^i \bar z^j +f-\bar f.
\]
Considering the Laurent series of $f$ and the self-shrinker equation \eqref{eqn:self-shrinker}, we can prove that $f-\bar f$ is quadratic. And so is $u$.
\end{proof}

\begin{zj}
  In \cite{HY}, a rigid result for self-shrinker to the flow  is proved. However, the function(or as an Hermitian metric of the trivial line bundle) should satisfy the so-called graphical condition in \cite{HY}. Here, we do not need any condition on $u$.
\end{zj}

We can prove Theorem \ref{thm-J-ss} by choosing analogous auxilary function as in \cite{CCY}.

\begin{proof}
We denote $J(z)=-\displaystyle \sum_{i=1}^n \frac{1}{\lambda_i(z)}$. Then
\begin{equation}
\label{1-J}
J_{\bar i} =\eta^{k\bar l} u_{k\bar l \bar i}, J_i =\eta^{k\bar l} u_{k\bar l i}
\end{equation}
where $\eta^{k\bar l} =\frac{\delta_{kl}}{\lambda_{l}^2}$. Combining the equations \eqref{eqn:2-nd} and \eqref{self-shrinker-J}, we get that
\[
\eta^{i\bar j} J_{i\bar j} -\frac{1}{2}(\langle z ,\partial J\rangle+ \langle \bar z, \bar \partial J\rangle)=0.
\]
We assume that $( u_{i\bar j})\geq \frac{\sqrt{2(2n-1+\delta)}}{r} I$ if $r\geq 1$. We choose a barrier function as follow
\[
w(r) =\varepsilon r^{1+\delta} +\max_{\partial B_{1}}\{-J\}.
\]
By direct computation, we have
\begin{equation}
\label{3-J}
\frac{r^2}{2(2n-1+\delta)}\Delta \omega- \frac{r}{2}w_r = 0.
\end{equation}
 Furtheremore,
\[
(\omega_{i\bar j})=\frac{\omega_r}{2r} (\delta_{ij} +\frac{(\delta-1)\bar z^i z^j}{2r^2}) ,
\]
since for any $X=\displaystyle\sum_{i=1}^n X^i\frac{\partial}{\partial z^i}$,
\[
\sqrt{-1}\partial \bar \partial w(X,\bar X)=X^i(\delta_{ij}+ \frac{(\delta-1)\bar z^i z^j}{2r^2}) \bar X^j\geq |X|^2+\frac{\delta-1}{2}|X|^2> 0.
\]
Thus for any $z$ outside of $B_{1}$, we have
\[
\eta^{i\bar j}w_{i\bar j} -\frac{1}{2} (\langle z,\partial w\rangle+ \langle\bar z,\bar \partial w\rangle)\leq \frac{r^2}{2(2n-1+\delta)}\Delta w -\frac{1}{2} (\langle z,\partial w\rangle+ \langle\bar z,\bar \partial w\rangle)= 0
\]
where we use the fact that $\eta^{i\bar j} \leq \frac{r^2}{2n-1+\delta} I$.

So far we have
\[
\eta^{i\bar j} w_{i\bar j}-\frac{1}{2} (\langle z,\partial w\rangle+ \langle \bar z,\bar \partial w\rangle) \leq \eta^{i\bar j} (-J)_{i\bar j}-\frac{1}{2} (\langle z,\partial (-J)\rangle+ \langle \bar z,\bar \partial (-J)\rangle)
\]
if $\left|z\right|\geq 1$. On the other hand, we know on $\partial B_{1}$
\[
w=\varepsilon+\max_{\partial B_{1}} \{-J\}\geq -J
\]
and
\[
w(\left|z\right|)> -J(z) \text{ when }\left| z\right|\to \infty
\]
by our assumption on $\sqrt{-1}\partial \bar\partial u$. Hence $w\geq -J$ outside $B_{1}$ by maximum principle. As $\varepsilon\to 0$, we know that $-J$ attains its global maximum in the closure of $B_{1}$. By the strong maximum principle, we know $-J$ is a constant.

By similar argument  as in the proof of Theorem \ref{Thm-selfshrinker}, we know that $u$ is a quadratic polynomial function.
\end{proof}

A direct consequence of the rigid theorem above is the following rigid result for self-shrinkers of $J$-flow.

\begin{tl}
\label{cor-J-ss}
If $u:\mathbb{C}^n\to \mathbb{R}$ is a smooth solution to \eqref{self-shrinker-J} satisfying
\[
\{u_{i\bar j}\}\geq CI
\]
for some constant $C$, then $u$ is quadratic.
\end{tl}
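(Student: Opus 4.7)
The plan is to deduce the corollary immediately from Theorem \ref{thm-J-ss}; the only verification required is that the simpler uniform bound $\{u_{i\bar{j}}\} \geq CI$ implies the growth-type bound appearing in that theorem. Implicitly I take $C > 0$, which is the case of interest (otherwise strict plurisubharmonicity, on which \eqref{self-shrinker-J} depends through the $\frac{1}{\lambda_i}$ terms, is not even guaranteed).

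The key observation is that the function $r \mapsto \frac{\sqrt{2n-1+\delta}}{r}$ decays to $0$ as $r \to \infty$, so a uniform positive lower bound on the complex Hessian is strictly stronger than the decay-rate bound of Theorem \ref{thm-J-ss}. Concretely, for each fixed $\delta > 0$ I set $R_\delta = \frac{\sqrt{2n-1+\delta}}{C}$ and then observe
\[
\{u_{i\bar{j}}(z)\} \geq CI \geq \frac{\sqrt{2n-1+\delta}}{|z|} I \quad\text{whenever } |z| \geq R_\delta,
\]
so the growth assumption of Theorem \ref{thm-J-ss} is verified for every $\delta > 0$.

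Since $u$ inherits smoothness, strict plurisubharmonicity (guaranteed by $C > 0$), and the self-shrinker equation \eqref{self-shrinker-J} from the hypotheses of the corollary, all the conditions of Theorem \ref{thm-J-ss} are satisfied. Applying that theorem yields that $u$ is a quadratic polynomial, completing the argument. There is no genuine obstacle here: the analytic machinery (the barrier function on the exterior of a ball, the maximum principle applied to $-J$, and the promotion from constancy of $J$ to the quadratic structure of $u$ via pluriharmonicity of $u - \sum C_{i\bar{j}} z^i \bar z^j$) has already been absorbed into the proof of Theorem \ref{thm-J-ss}. The corollary is simply the specialisation of that theorem to the most easily checkable sufficient hypothesis on the complex Hessian, namely a uniform positive lower bound.
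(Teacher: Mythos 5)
Your proposal is correct and matches the paper's intent exactly: the paper offers no separate argument, presenting the corollary as a direct consequence of Theorem \ref{thm-J-ss}, and your verification that a uniform positive lower bound $\{u_{i\bar j}\}\geq CI$ (with $C>0$, forced by strict plurisubharmonicity) dominates the decaying bound $\frac{\sqrt{2n-1+\delta}}{|z|}I$ for $|z|$ large is precisely the missing check. Nothing further is needed.
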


\begin{zj}
  We should also remark that the rigid theorem for a class of fully non-linear second elliptic operator is proved in Theorem 1.2 \cite{W2}. However, the operator in \cite{W2} acts on the real symmetric matrices. Using the canonical relation between Hermitian matrices and real symmetric matrices, we can regard the operator $f$ acting on $n\times n$ Hermitian matrices as an operator $\tilde f$ acting on $2n\times 2n$ real symmetric matrices.
  In fact, if we write Hermitian matrix $A$ as $A=E+\sqrt{-1}F$ where $E$ and $F$ are real matrices such that $E^T=E$ and $F^T=-F$, then the real symmetric matrix $B$ related to $A$ can be taken as
  \[
  B=\left(
      \begin{array}{cc}
        E/2 & F^T/2 \\
        F/2 & E/2 \\
      \end{array}
    \right)
  \]
  Then the operator $f:A\to \mathbb{R}$
  \[
  f(A)=\sum_{i=1}^n \arctan \lambda_i(A)
  \]
  can be regarded as a operator $\tilde{f}$ on $B$

  \[
  \tilde f(B)= \frac{1}{2}\sum_{i=1}^{2n} \arctan \lambda_i(B+J\cdot B\cdot J^T)
  \]
  where
  \[
  J=\left(
      \begin{array}{cc}
        0 & I \\
        -I & 0 \\
      \end{array}
    \right)
  \]
  is the standard complex structure on $\mathbb{C}^n$. In particular, the new operator $\tilde f$ does not satisfies condition (iii) of Theorem 1.2 in \cite{W2}, i.e. for any positive $B$
  \[
  \left|\left|D\tilde f(B) \cdot B\right|\right|\leq k_2,
  \]
  where $D\tilde f(B)=(\frac{\partial\tilde{f}}{\partial B_{ij}})$, $\|\cdot\|$ is a fixed norm on the vector space of the matrices and $k_2$ is a certain constant. In particular, we consider the case $n=1$, $A=a+\sqrt{-1}c$ and we denote
  \[
  B=\left(
      \begin{array}{cc}
        a & c \\
        c & a \\
      \end{array}
    \right)
  \]
  for $a,c\in \mathbb{R}$. Then $\tilde f$ can be expressed by
  \[
  \tilde f(B)=\arctan(a).
  \]
  In particular, $D\tilde f(B)\cdot B=\frac{B}{1+a^2}$ and its norm can not be controlled by a fixed constant.
\end{zj}

%===================================================
%\begin{thebibliography}{10}
%
%\end{thebibliography}

\end{document}